\let\@fnsymbol\@arabic
\newtheorem{thm}{Theorem}[section]
\numberwithin{equation}{section}
\theoremstyle{definition}
\newtheorem*{mythm}{Theorem}
\patchcmd{\@maketitle}{\begin{center}}{\begin{flushleft}}{}{}
\patchcmd{\@maketitle}{\begin{tabular}[t]{c}}{\begin{tabular}[t]{@{}l}}{}{}
\patchcmd{\@maketitle}{\end{center}}{\end{flushleft}}{}{}
\begin{document}
\title{Approximation of continuous periodic functions of two variables via power series methods of summability}
\author{Enes Yavuz\thanks{Department of Mathematics, Manisa Celal Bayar University, Manisa, Turkey. E-mail: enes.yavuz@cbu.edu.tr} \ \ and \"{O}zer Talo\thanks{ Muradiye Mahallesi, Yunusemre, 45140 Manisa, Turkey. E-mail: ozertalo@hotmail.com}}
\date{{\small }}
\maketitle
\thispagestyle{titlepage}

\noindent\textbf{Abstract:}
\noindent We prove a Korovkin type approximation theorem via power series methods of summability for continuous $2\pi$-periodic functions of two variables  and verify the convergence of approximating double sequences of positive linear operators by using modulus of continuity. An example concerning double Fourier series is also constructed to illustrate the obtained results.
\section{Introduction}
The classical Korovkin second theorem is stated as follows.
\begin{mythm}
Let $(L_n)$ be a sequence of positive linear operators from the space $C_{2\pi}(\mathbb{R})$ into itself. Then $\lim_n\Vert L_n(f)- f\Vert=0$ for all $f\in C_{2\pi}(\mathbb{R})$ if and only if $\lim_n\Vert L_n(f_i)- f_i\Vert=0$ for $i=0,1,2$ where  $f_0(x)=1, f_1(x)=\cos x, f_2(x)=\sin x$.
\end{mythm}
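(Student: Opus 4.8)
The plan is to prove the nontrivial implication, namely that the three conditions $\lim_n\|L_n(f_i)-f_i\|=0$, $i=0,1,2$, force $\lim_n\|L_n(f)-f\|=0$ for every $f\in C_{2\pi}(\mathbb{R})$; the reverse implication is immediate by specialization. Throughout, $\|\cdot\|$ denotes the supremum norm.

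The core idea is to majorize $|f(x)-f(y)|$ by a multiple of a single nonnegative trigonometric bump. For each fixed $y\in\mathbb{R}$ set
\[
\psi_y(x)=\sin^2\!\Big(\frac{x-y}{2}\Big)=\frac12\big(1-\cos x\cos y-\sin x\sin y\big)=\frac12\big(f_0(x)-(\cos y)f_1(x)-(\sin y)f_2(x)\big),
\]
which is $2\pi$-periodic, nonnegative, and vanishes exactly at $x\equiv y\pmod{2\pi}$. Given $f\in C_{2\pi}(\mathbb{R})$ and $\varepsilon>0$, uniform continuity of $f$ yields $\delta\in(0,\pi)$ with $|f(x)-f(y)|<\varepsilon$ whenever $|x-y|<\delta$; since $f$ and $\psi_y$ are $2\pi$-periodic it suffices to consider $|x-y|\le\pi$, and for $\delta\le|x-y|\le\pi$ one has $\psi_y(x)\ge\sin^2(\delta/2)=:\gamma>0$. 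Treating the cases $|x-y|<\delta$ and $\delta\le|x-y|\le\pi$ separately — using $|f(x)-f(y)|\le 2\|f\|$ in the second — yields the uniform estimate
\[
|f(x)-f(y)|\le\varepsilon+\frac{2\|f\|}{\gamma}\,\psi_y(x)\qquad(x,y\in\mathbb{R}).
\]

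Now fix $y$ and apply $L_n$ in the variable $x$. Positivity gives $|L_n(g)|\le L_n(|g|)$, so together with linearity the last estimate implies
\[
\big|L_n(f)(y)-f(y)L_n(f_0)(y)\big|\le\varepsilon\,L_n(f_0)(y)+\frac{2\|f\|}{\gamma}\,L_n(\psi_y)(y).
\]
Since $\cos y$ and $\sin y$ are constants with respect to $x$, linearity also gives $L_n(\psi_y)(y)=\tfrac12\big(L_n(f_0)(y)-(\cos y)L_n(f_1)(y)-(\sin y)L_n(f_2)(y)\big)$, and subtracting the identity $\tfrac12(1-\cos^2 y-\sin^2 y)=0$ rewrites this as
\[
L_n(\psi_y)(y)=\tfrac12\Big[(L_n(f_0)(y)-f_0(y))-(\cos y)(L_n(f_1)(y)-f_1(y))-(\sin y)(L_n(f_2)(y)-f_2(y))\Big],
\]
whence $0\le L_n(\psi_y)(y)\le\tfrac12\sum_{i=0}^{2}\|L_n(f_i)-f_i\|$ for all $y$. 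Combining this with $|f(y)|\,|L_n(f_0)(y)-1|\le\|f\|\,\|L_n(f_0)-f_0\|$ and $L_n(f_0)(y)\le 1+\|L_n(f_0)-f_0\|$, and then taking the supremum over $y$, we get
\[
\|L_n(f)-f\|\le\varepsilon\big(1+\|L_n(f_0)-f_0\|\big)+\Big(\frac{\|f\|}{\gamma}+\|f\|\Big)\sum_{i=0}^{2}\|L_n(f_i)-f_i\|.
\]
Letting $n\to\infty$ and invoking the hypotheses gives $\limsup_n\|L_n(f)-f\|\le\varepsilon$, and since $\varepsilon>0$ is arbitrary the theorem follows.

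I expect the only genuine obstacle to be the construction of $\psi_y$ together with the verification of the uniform quadratic majorization from uniform continuity; once it is in place, everything reduces to bookkeeping with the positivity and linearity of $L_n$ and the three vanishing error terms. A secondary point to keep straight is that $L_n$ acts on the $x$-variable while $y$ is merely a parameter, which is exactly why $\psi_y$ must be expressed as a combination of $f_0,f_1,f_2$ with $y$-dependent coefficients before $L_n$ is applied.
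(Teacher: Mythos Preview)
The paper does not actually prove this statement; it is quoted in the introduction as the classical Korovkin second theorem and left unproved, serving only as background. Your argument is correct and is the standard proof, and it is exactly the one-variable template that the paper adapts when proving its own Theorem~2.1: the same majorization $|f(x)-f(y)|\le\varepsilon+\dfrac{2\|f\|}{\sin^2(\delta/2)}\sin^2\!\big(\tfrac{x-y}{2}\big)$ is used there (in two variables, with the sum of two such bumps), followed by the same application of positivity and linearity and the same expansion of $\sin^2\!\big(\tfrac{a-b}{2}\big)$ in terms of the test functions.
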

Classical versions of which are introduced by P. P. Korovkin\cite{korovkin1,korovkin2}, Korovkin type theorems deal with the approximation of functions by positive linear operators by means of providing subsets of test functions which guarantee the approximation in whole space. Following its invention, Korovkin theory has developed in many ways and found applications in various branches of mathematics. Researchers have investigated the approximation of functions in different spaces through various subsets of test functions. Besides, in connection with sequence transformations, weighted mean methods and power series methods of summability have been applied to Korovkin type theorems to recover the convergence of operators for which classical Korovkin theorems fail to work\cite{tek2,tek3,tek5,tek6,tek7,tek8}. Furthermore, there are studies dealing with Korovkin type theorems via weighted mean summability methods for continuous functions of two variables\cite{duman,demirci,cift2,cift3}. In this paper, we prove a Korovkin type approximation theorem for $2\pi$-periodic and real valued continuous functions on $\mathbb{R}^2$ by using power series methods of summability and also give an approximation theorem by using modulus of continuity. Besides we  construct an illustrative example concerning double Fourier series of functions of two variables such that our new result works but classical Korovkin theorem for periodic functions of two variables does not work.

Now we give some preliminaries concerning the space of  $2\pi$-periodic and real valued continuous functions on  $\mathbb{R}^2$  and concerning the concept of power series methods of summability for double sequences. A function $f$ on $\mathbb{R}^2$ is $2\pi$-periodic if for all $(x,y)\in \mathbb{R}^2$
\begin{eqnarray*}
f(x,y)&=&f(x+2k\pi ,y)=f(x, y+2k\pi)
\end{eqnarray*}
holds for $k=0, \pm 1,\pm 2,\ldots.$ The space of  $2\pi$-periodic and real valued continuous functions on $\mathbb{R}^2$ is denoted by $C^*(\mathbb{R}^2)$ and is equipped with the norm
\begin{eqnarray*}
\Vert f\Vert_{C^*(\mathbb{R}^2)}:=\sup_{(x,y)\in\mathbb{R}^2}|f(x,y)| \qquad (f\in C^*(\mathbb{R}^2)).
\end{eqnarray*}
Suppose that $p=(p_{mn})$ is a double sequence of nonnegative numbers with $p_{00}>0$ such that $\sum_{k,l=0}^{m,n}p_{kl}\to\infty$ as $m,n\to\infty$ and associated power series $p(r,s)=\sum_{m,n=0}^{\infty}p_{mn}r^ms^n$ is convergent for $r,s\in(0,1)$, where within the paper convergence of double sequences and of double series is meant in Pringsheim's sense. A sequence $(a_{mn})$ is said to be summable to $l$ by power series method determined by $p$ if $\sum_{m,n=0}^{\infty}a_{mn}p_{mn}r^ms^n$ converges for $r,s\in(0,1)$ and
\begin{eqnarray*}
\frac{1}{p(r,s)}\sum_{m,n=0}^{\infty}a_{mn}p_{mn}r^ms^n=l \quad as\quad r,s\to 1^-,
\end{eqnarray*}
where we write $a_{mn}\to l\ (J_p)$. Power series method $J_p$ is b-regular if for any fixed $m_0$, $n_0$
\begin{eqnarray*}
\frac{1}{p(r,s)}\sum_{m=0}^{\infty}p_{mn_0}r^m \to 0, \qquad \frac{1}{p(r,s)}\sum_{n=0}^{\infty}p_{m_0n}s^n \to 0  \quad as\quad r,s\to 1^-
\end{eqnarray*}
holds(see \cite[p. 84]{regular1}, \cite{regular2,regular3}). We note that in special cases $p_{mn}=1$ and $p_{mn}=\frac{1}{(m+1)(n+1)}$ corresponding power series methods are the Abel summability method and logarithmic summability method, respectively.

Let $\left(L_{mn}\right)$ be a double sequence of positive linear operators from $C^*(\mathbb{R}^2)$ into itself such that for every \mbox{$r,s\in(0,1)$}
\begin{eqnarray}\label{basic}
\sum_{m,n=0}^{\infty}\Vert L_{mn}(f_0)\Vert_{C^*(\mathbb{R}^2)} p_{mn}r^ms^n<\infty
\end{eqnarray}
where $f_0(x,y)=1$. Then for all $f\in C^*(\mathbb{R}^2)$ double series {\small$\sum_{m,n=0}^{\infty}L_{mn}(f;x,y)p_{mn}r^ms^n$} is convergent for $r,s\in(0,1)$.
\section{Main Results}
\begin{thm}\label{theorem1}
Let $(L_{mn})$ be a double sequence of positive linear operators from $C^*(\mathbb{R}^2)$ into  $C^*(\mathbb{R}^2)$ such that (\ref{basic}) is satisfied. Then for all $f\in C^*(\mathbb{R}^2)$,
\begin{eqnarray}\label{theorem1-1}
L_{mn}(f)\to f \ (J_p)
\end{eqnarray}
if and only if
\begin{eqnarray}\label{theorem1-2}
L_{mn}(f_i)\to f_i \ (J_p)\quad (i=0,1,2,3,4)
\end{eqnarray}
with $f_0(x,y)=1, f_1(x,y)=\sin x, f_2(x,y)=\sin y,f_3(x,y)=\cos x,f_4(x,y)=\cos y$.
\end{thm}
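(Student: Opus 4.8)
The necessity of (\ref{theorem1-2}) is immediate: each test function $f_i$ belongs to $C^*(\mathbb{R}^2)$, so (\ref{theorem1-1}) applied with $f=f_i$ gives it at once. For the sufficiency I would run the classical proof of Korovkin's second theorem with the operators $L_{mn}$ replaced by their power series averages. Put
\[
T_{r,s}(g;x,y):=\frac{1}{p(r,s)}\sum_{m,n=0}^{\infty}L_{mn}(g;x,y)\,p_{mn}r^ms^n,\qquad r,s\in(0,1),\ g\in C^*(\mathbb{R}^2).
\]
By (\ref{basic}) and the positivity estimate $|L_{mn}(g;x,y)|\le\|g\|_{C^*(\mathbb{R}^2)}L_{mn}(f_0;x,y)$, every series above converges absolutely, so $T_{r,s}$ is a well-defined positive linear operator on $C^*(\mathbb{R}^2)$; the hypothesis (\ref{theorem1-2}) then reads $\|T_{r,s}(f_i)-f_i\|_{C^*(\mathbb{R}^2)}\to 0$ as $r,s\to 1^-$ and the conclusion (\ref{theorem1-1}) reads $\|T_{r,s}(f)-f\|_{C^*(\mathbb{R}^2)}\to 0$.

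Fix $f\in C^*(\mathbb{R}^2)$, put $M:=\|f\|_{C^*(\mathbb{R}^2)}$, and fix $\varepsilon>0$. The auxiliary function I would use is
\begin{eqnarray*}
\phi_{x,y}(t,u)&:=&\sin^2\tfrac{t-x}{2}+\sin^2\tfrac{u-y}{2}\\
&=&f_0(t,u)-\tfrac12\cos x\,f_3(t,u)-\tfrac12\sin x\,f_1(t,u)-\tfrac12\cos y\,f_4(t,u)-\tfrac12\sin y\,f_2(t,u),
\end{eqnarray*}
the second line coming from $\sin^2\tfrac{\theta}{2}=\tfrac{1-\cos\theta}{2}$ and the addition formulas. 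It satisfies $\phi_{x,y}\ge 0$, $\phi_{x,y}(x,y)=0$, and, since $f$ is uniformly continuous on $\mathbb{R}^2$ by continuity together with $2\pi$-periodicity, there is $\delta\in(0,\pi]$ with $|f(t,u)-f(x,y)|<\varepsilon$ whenever $|t-x|$ and $|u-y|$ are both less than $\delta$ modulo $2\pi$; as $\phi_{x,y}(t,u)\ge d:=\sin^2\tfrac{\delta}{2}>0$ off that set, one obtains the uniform bound
\[
|f(t,u)-f(x,y)|\le\varepsilon+\frac{2M}{d}\,\phi_{x,y}(t,u)\qquad\big((t,u),(x,y)\in\mathbb{R}^2\big).
\]

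Applying $T_{r,s}$ in the variables $(t,u)$ to this inequality and using linearity, positivity, the splitting $T_{r,s}(f;x,y)-f(x,y)=T_{r,s}\big(f(\cdot,\cdot)-f(x,y);x,y\big)+f(x,y)\big(T_{r,s}(f_0;x,y)-f_0(x,y)\big)$, and the displayed expansion of $\phi_{x,y}$ (whose coefficients have modulus $\le 1$), one arrives after routine bookkeeping at
\begin{eqnarray*}
\|T_{r,s}(f)-f\|_{C^*(\mathbb{R}^2)}&\le&\varepsilon+\left(\varepsilon+M+\frac{2M}{d}\right)\|T_{r,s}(f_0)-f_0\|_{C^*(\mathbb{R}^2)}\\
&&{}+\frac{M}{d}\sum_{i=1}^{4}\|T_{r,s}(f_i)-f_i\|_{C^*(\mathbb{R}^2)}.
\end{eqnarray*}
By (\ref{theorem1-2}) the right-hand side tends to $\varepsilon$ as $r,s\to 1^-$, hence $\limsup_{r,s\to 1^-}\|T_{r,s}(f)-f\|_{C^*(\mathbb{R}^2)}\le\varepsilon$; letting $\varepsilon\downarrow 0$ yields (\ref{theorem1-1}).

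I do not expect a deep obstacle here. The two points that need care are: the justification of the term-by-term manipulation of the double power series defining $T_{r,s}$, which is supplied precisely by the absolute convergence that (\ref{basic}) guarantees; and the verification that the constant $d$ in the estimate for $|f(t,u)-f(x,y)|$ can be chosen independently of $(x,y)$, which is where $2\pi$-periodicity is used, reducing the matter to a compact fundamental square. Everything else is the standard Korovkin bookkeeping transplanted from $L_{mn}$ to $T_{r,s}$.
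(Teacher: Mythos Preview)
Your proof is correct and follows essentially the same route as the paper: both use the auxiliary function $\phi_{x,y}(t,u)=\sin^2\tfrac{t-x}{2}+\sin^2\tfrac{u-y}{2}$, derive the uniform bound $|f(t,u)-f(x,y)|\le\varepsilon+\tfrac{2M}{\sin^2(\delta/2)}\phi_{x,y}(t,u)$, expand $\phi_{x,y}$ in the test functions $f_0,\dots,f_4$, and apply the power-series averaged operators. Your introduction of the notation $T_{r,s}$ streamlines the bookkeeping and your final constants are marginally sharper than the paper's uniform constant $K=\varepsilon+M_f+2M_f/\sin^2(\delta/2)$, but the argument is the same.
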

\begin{proof}
Let $(L_{mn})$ be a double sequence of positive linear operators from $C^*(\mathbb{R}^2)$ into  $C^*(\mathbb{R}^2)$ satisfying (\ref{basic}). First suppose that (\ref{theorem1-1}) is satisfied for all functions in $C^*(\mathbb{R}^2)$. Then in particular it is satisfied for $f_0, f_1, f_2, f_3,f_4$ since $f_i\in C^*(\mathbb{R}^2)$ and this completes the necessity part of the proof of theorem. Now we shall prove the sufficiency part. Let $f\in C^*(\mathbb{R}^2)$ and (\ref{theorem1-2}) be satisfied. Let $I$ and $J$ be closed subintervals of length $2\pi$  of $\mathbb{R}$. Fix $(x,y)\in I\times J$. It follows from the continuity of $f$ that for given $\varepsilon>0$ there is a number $\delta>0$ such that
\begin{eqnarray*}
|f(u,v)-f(x,y)|<\varepsilon+\frac{2M_f}{\sin^2\left(\frac{\delta}{2}\right)}\left\{\sin^2\left(\frac{u-x}{2}\right)+\sin^2\left(\frac{v-y}{2}\right)\right\}
\end{eqnarray*}
where $M_f=\Vert f\Vert_{C^*(\mathbb{R}^2)}$ in view of the the proof of Theorem 2.1 in \cite{duman}. Hence we obtain
\begin{eqnarray*}
\Bigg|\frac{1}{p(r,s)}\sum_{m,n=0}^{\infty}L_{mn}(f;x,y)&&\!\!\!\!\!\!\!\!p_{mn}r^ms^n-f(x,y)\Bigg|\\
&=&
\left|\frac{1}{p(r,s)}\!\!\sum_{m,n=0}^{\infty}\!\!\!L_{mn}(f;x,y)p_{mn}r^ms^n-\frac{f(x,y)}{p(r,s)}\!\!\sum_{m,n=0}^{\infty}\!\!\!L_{mn}(f_0;x,y)p_{mn}r^ms^n\right.
\\&& \
\left.+\frac{f(x,y)}{p(r,s)}\sum_{m,n=0}^{\infty}L_{mn}(f_0;x,y)p_{mn}r^ms^n-f(x,y) \right|
\\&\leq&
\frac{1}{p(r,s)}\sum_{m,n=0}^{\infty}L_{mn}\left(|f(u,v)-f(x,y)|;x,y\right)p_{mn}r^ms^n
\\&& \
+M_f\left|\frac{1}{p(r,s)}\sum_{m,n=0}^{\infty}L_{mn}(f_0;x,y)p_{mn}r^ms^n-f_0(x,y)\right|
\\&\leq&
\frac{\varepsilon}{p(r,s)}\sum_{m,n=0}^{\infty}L_{mn}(f_0;x,y)p_{mn}r^ms^n
\\&& \
+\frac{2M_f}{\sin^2\left(\frac{\delta}{2}\right)p(r,s)}\!\!\sum_{m,n=0}^{\infty}\!\!\!L_{mn}\!\left(\sin^2\left(\frac{u-x}{2}\right)+\sin^2\left(\frac{v-y}{2}\right);x,y\right)\!p_{mn}r^ms^n
\\&&
+\ M_f\left|\frac{1}{p(r,s)}\sum_{m,n=0}^{\infty}L_{mn}(f_0;x,y)p_{mn}r^ms^n-f_0(x,y)\right|
\\&\leq&
\varepsilon+(\varepsilon+M_f)\left|\frac{1}{p(r,s)}\sum_{m,n=0}^{\infty}L_{mn}(f_0;x,y)p_{mn}r^ms^n-f_0(x,y)\right|
\\&&
+\frac{M_f}{\sin^2\left(\frac{\delta}{2}\right)}\left\{2\left| \frac{1}{p(r,s)}\sum_{m,n=0}^{\infty}L_{mn}(f_0;x,y)p_{mn}r^ms^n-f_0(x,y)\right|\right.
\\&&\qquad\qquad\quad
+|\sin x|\left|\frac{1}{p(r,s)}\sum_{m,n=0}^{\infty}L_{mn}(f_1;x,y)p_{mn}r^ms^n-f_1(x,y)\right|
\\&& \qquad\qquad\quad
 +|\sin y|\left|\frac{1}{p(r,s)}\sum_{m,n=0}^{\infty}L_{mn}(f_2;x,y)p_{mn}r^ms^n-f_2(x,y)\right|
 \\&&\qquad\qquad\quad
+|\cos x|\left|\frac{1}{p(r,s)}\sum_{m,n=0}^{\infty}L_{mn}(f_3;x,y)p_{mn}r^ms^n-f_3(x,y)\right|
\\&&\qquad\qquad\quad
 \left. +|\cos y|\left|\frac{1}{p(r,s)}\sum_{m,n=0}^{\infty}L_{mn}(f_4;x,y)p_{mn}r^ms^n-f_4(x,y)\right|\right\}
\\&\leq&
 \varepsilon+K\mathlarger{\sum_{i=0}^4} \left|\frac{1}{p(r,s)}\sum_{m,n=0}^{\infty}L_{mn}(f_i;x,y)p_{mn}r^ms^n-f_i(x,y)\right|
\end{eqnarray*}%
where $K=\varepsilon+ M_f+ 2M_f/\sin^2(\delta/2)$ by using the fact that $\sin^2\left(\frac{a-b}{2}\right)=\frac{1}{2}(1-\cos a\cos b-\sin a\sin b)$. Then taking supremum over $(x,y)$ we conclude
\begin{eqnarray*}
\Bigg\Vert\frac{1}{p(r,s)}\sum_{m,n=0}^{\infty}L_{mn}(f)p_{mn}r^ms^n-f\Bigg\Vert_{C^*(\mathbb{R}^2)}
\leq \varepsilon + K\mathlarger{\sum_{i=0}^4} \left\Vert\frac{1}{p(r,s)}\sum_{m,n=0}^{\infty}L_{mn}(f_i)p_{mn}r^ms^n-f_i\right\Vert_{C^*(\mathbb{R}^2)}
\end{eqnarray*}
and this completes the proof.
\end{proof}
Now we give a theorem concerning the convergence of the sequence of positive linear operators acting on the space $C^*(\mathbb{R}^2)$ with the help of modulus of continuity. For $f\in C^*(\mathbb{R}^2)$ and for $(u,v),(x,y)\in\mathbb{R}^2$ the modulus of continuity of $f$ is defined by
\begin{eqnarray*}
\omega(f; \delta):=\sup\left\{|f(u,v)-f(x,y)|: \sqrt{(u-x)^2+(v-y)^2}\leq \delta\right\}
\end{eqnarray*}
for any $\delta>0$.
\begin{thm}\label{theorem2}
Let $(L_{mn})$ be a double sequence of positive linear operators from $C^*(\mathbb{R}^2)$ into  $C^*(\mathbb{R}^2)$ such that (\ref{basic}) is satisfied. If
\begin{itemize}
\item[(i)] $L_{mn}(f_0)\to f_0 \ (J_p)$
\item[(ii)] $\lim\limits_{r,s\to1^-}\omega(f; \gamma(r,s))=0$ where  {\small$\gamma(r,s)=\sqrt{\left\Vert\frac{1}{p(r,s)}\sum_{m,n=0}^{\infty}L_{mn}(\varphi)p_{mn}r^ms^n\right\Vert_{C^*(\mathbb{R}^2)}}$}
\end{itemize}
with $\varphi(u,v)=\sin^2\left(\frac{u-x}{2}\right)+\sin^2\left(\frac{v-y}{2}\right)$, then for all $f\in C^*(\mathbb{R}^2)$
\begin{eqnarray*}
L_{mn}(f)\to f \ (J_p).
\end{eqnarray*}
\end{thm}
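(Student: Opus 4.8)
The plan is to reproduce the averaging scheme from the proof of Theorem~\ref{theorem1}, but with the continuity estimate used there replaced by a quantitative one in terms of the modulus of continuity, and then to exploit the freedom in the auxiliary parameter by letting it depend on $(r,s)$. The key preliminary step I would establish is the pointwise inequality: for every $f\in C^*(\mathbb{R}^2)$, every $\delta>0$, and all $(u,v),(x,y)\in\mathbb{R}^2$,
\[
|f(u,v)-f(x,y)|\le\omega(f;\delta)\left(1+\frac{\pi^2}{\delta^2}\left(\sin^2\left(\frac{u-x}{2}\right)+\sin^2\left(\frac{v-y}{2}\right)\right)\right).
\]
To prove it, I would first use the $2\pi$-periodicity of $f$ together with the fact that $u\mapsto\sin^2\left(\frac{u-x}{2}\right)$ also has period $2\pi$ to replace $(u,v)$ by a congruent point with $|u-x|\le\pi$ and $|v-y|\le\pi$, which changes neither side; on that range the elementary inequality $|\sin(t/2)|\ge|t|/\pi$ (valid for $|t|\le\pi$) gives $(u-x)^2+(v-y)^2\le\pi^2\big(\sin^2((u-x)/2)+\sin^2((v-y)/2)\big)$. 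Combining this with $|f(u,v)-f(x,y)|\le\omega\big(f;\sqrt{(u-x)^2+(v-y)^2}\big)\le\big(1+\delta^{-2}((u-x)^2+(v-y)^2)\big)\omega(f;\delta)$, which follows from standard properties of $\omega$ upon distinguishing whether $\sqrt{(u-x)^2+(v-y)^2}$ is $\le\delta$ or $>\delta$, yields the claim. This reduction through periodicity is the only genuinely delicate point; everything afterwards runs parallel to Theorem~\ref{theorem1}.

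Writing $T_{r,s}(g;x,y):=\frac{1}{p(r,s)}\sum_{m,n=0}^{\infty}L_{mn}(g;x,y)p_{mn}r^ms^n$ --- well defined for every $g\in C^*(\mathbb{R}^2)$ by (\ref{basic}) and positivity, and in particular with $\gamma(r,s)<\infty$ since $0\le\varphi\le2f_0$ gives $0\le T_{r,s}(\varphi;x,y)\le2T_{r,s}(f_0;x,y)$ --- I would apply the positive linear operator $L_{mn}$ to the pointwise inequality above, read as an inequality in $(u,v)$ with $(x,y)$ and $\delta$ fixed, to get
\[
\left|L_{mn}(f;x,y)-f(x,y)L_{mn}(f_0;x,y)\right|\le\omega(f;\delta)\left(L_{mn}(f_0;x,y)+\frac{\pi^2}{\delta^2}L_{mn}(\varphi;x,y)\right).
\]
Averaging against the weights $p_{mn}r^ms^n/p(r,s)$, then inserting $\pm f(x,y)$ and bounding $|f(x,y)|\le M_f$ exactly as in Theorem~\ref{theorem1}, and finally taking the supremum over $(x,y)\in\mathbb{R}^2$ --- using $T_{r,s}(f_0;x,y)\le1+\Vert T_{r,s}(f_0)-f_0\Vert_{C^*(\mathbb{R}^2)}$, $0\le T_{r,s}(\varphi;x,y)\le\gamma(r,s)^2$ and $|T_{r,s}(f_0;x,y)-1|\le\Vert T_{r,s}(f_0)-f_0\Vert_{C^*(\mathbb{R}^2)}$ --- I arrive at
\[
\left\Vert T_{r,s}(f)-f\right\Vert_{C^*(\mathbb{R}^2)}\le\omega(f;\delta)\left(1+\left\Vert T_{r,s}(f_0)-f_0\right\Vert_{C^*(\mathbb{R}^2)}+\frac{\pi^2}{\delta^2}\gamma(r,s)^2\right)+M_f\left\Vert T_{r,s}(f_0)-f_0\right\Vert_{C^*(\mathbb{R}^2)}.
\]

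To conclude, I would set $\delta=\gamma(r,s)$ whenever $\gamma(r,s)>0$; when $\gamma(r,s)=0$, letting $\delta\to0^+$ in the last estimate (and using uniform continuity of $f$, so $\omega(f;\delta)\to0$) already gives $\Vert T_{r,s}(f)-f\Vert_{C^*(\mathbb{R}^2)}\le M_f\Vert T_{r,s}(f_0)-f_0\Vert_{C^*(\mathbb{R}^2)}$. For $\gamma(r,s)>0$ the choice $\delta=\gamma(r,s)$ turns the estimate into
\[
\left\Vert T_{r,s}(f)-f\right\Vert_{C^*(\mathbb{R}^2)}\le(1+\pi^2)\,\omega\big(f;\gamma(r,s)\big)+\big(\omega(f;\gamma(r,s))+M_f\big)\left\Vert T_{r,s}(f_0)-f_0\right\Vert_{C^*(\mathbb{R}^2)}.
\]
Letting $r,s\to1^-$, hypothesis (i) forces $\Vert T_{r,s}(f_0)-f_0\Vert_{C^*(\mathbb{R}^2)}\to0$ and hypothesis (ii) forces $\omega(f;\gamma(r,s))\to0$, so the right-hand side tends to $0$ and hence $L_{mn}(f)\to f\ (J_p)$. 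Finally, since $\gamma(r,s)$ does not involve $f$ and every element of $C^*(\mathbb{R}^2)$ is uniformly continuous, (ii) is equivalent to $\gamma(r,s)\to0$, so $\omega(g;\gamma(r,s))\to0$ for all $g\in C^*(\mathbb{R}^2)$; running the displayed estimate with an arbitrary such $g$ then gives the conclusion for every $f\in C^*(\mathbb{R}^2)$.
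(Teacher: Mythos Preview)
Your proof is correct and follows essentially the same route as the paper: you establish the same pointwise inequality $|f(u,v)-f(x,y)|\le\omega(f;\delta)\big(1+\pi^2\delta^{-2}\varphi(u,v)\big)$ (which the paper simply quotes from \cite{demirci}), apply the positive linear operators and average, take suprema, and specialize $\delta=\gamma(r,s)$ to reach the same final estimate. Your treatment is in fact more careful than the paper's in that you justify the pointwise inequality via periodicity and the bound $|\sin(t/2)|\ge|t|/\pi$, and you explicitly handle the degenerate case $\gamma(r,s)=0$; your closing remark that (ii) is equivalent to $\gamma(r,s)\to0$ is not literally true for every individual $f$ (e.g.\ constants), but this concerns only the slightly ambiguous ``for all $f$'' phrasing of the statement and does not affect the argument for a fixed $f$.
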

\begin{proof}
Let $f\in C^*(\mathbb{R}^2)$  and $(x,y)\in[-\pi, \pi]\times[-\pi, \pi]$. Then in view of the proof of Theorem 9 in \cite{demirci} we have
\begin{eqnarray*}
|f(u,v)-f(x,y)|\leq \left(1+\pi^2\frac{\sin^2\left(\frac{u-x}{2}\right)+\sin^2\left(\frac{v-y}{2}\right)}{\delta^2}\right)\omega(f; \delta)
\end{eqnarray*}
and followingly we get
\begin{eqnarray*}
\Bigg|\frac{1}{p(r,s)}\sum_{m,n=0}^{\infty}L_{mn}(f;x,y)&&\!\!\!\!\!\!\!\!p_{mn}r^ms^n-f(x,y)\Bigg|\\
\\&\leq&\frac{1}{p(r,s)}\sum_{m,n=0}^{\infty}L_{mn}\left(|f(u,v)-f(x,y)|;x,y\right)p_{mn}r^ms^n
\\&& \
+M_f\left|\frac{1}{p(r,s)}\sum_{m,n=0}^{\infty}L_{mn}(f_0;x,y)p_{mn}r^ms^n-f_0(x,y)\right|
\\&\leq&
\frac{\omega(f; \delta)}{p(r,s)}\!\!\sum_{m,n=0}^{\infty}\!\!\!L_{mn}\!\left(1+(\pi/\delta)^2\left\{\sin^2\left(\frac{u-x}{2}\right)\!+\!\sin^2\left(\frac{v-y}{2}\right)\right\};x,y\right)\!p_{mn}r^ms^n
\\&& \
+M_f\left|\frac{1}{p(r,s)}\sum_{m,n=0}^{\infty}L_{mn}(f_0;x,y)p_{mn}r^ms^n-f_0(x,y)\right|
\\&=&
\left|\frac{1}{p(r,s)}\sum_{m,n=0}^{\infty}L_{mn}(f_0;x,y)p_{mn}r^ms^n-f_0(x,y)\right|\omega(f; \delta) +\omega(f; \delta)
\\&& \
+\frac{\pi^2}{\delta^2}\frac{\omega(f; \delta)}{p(r,s)}\sum_{m,n=0}^{\infty}L_{m,n}\left(\varphi;x,y\right)p_{mn}r^ms^n
\\&& \
+M_f\left|\frac{1}{p(r,s)}\sum_{m,n=0}^{\infty}L_{mn}(f_0;x,y)p_{mn}r^ms^n-f_0(x,y)\right|
\end{eqnarray*}
as in the proof of Theorem \ref{theorem1}. Taking supremum over $(x,y)$ and putting $\delta:=\gamma(r,s)$ we conclude that
\begin{eqnarray*}
\Bigg\Vert\frac{1}{p(r,s)}\sum_{m,n=0}^{\infty}&&\!\!\!\!\!\!L_{mn}(f)p_{mn}r^ms^n-f\Bigg\Vert_{C^*(\mathbb{R}^2)}
\\&\leq&
\left\Vert\frac{1}{p(r,s)}\!\!\sum_{m,n=0}^{\infty}\!\!\!L_{mn}(f_0)p_{mn}r^ms^n-f_0\right\Vert_{C^*(\mathbb{R}^2)}\!\!\!\!\omega(f; \gamma(r,s)) + (1+\pi^2)\omega(f; \gamma(r,s))
 \\&& \quad +
M_f\left\Vert\frac{1}{p(r,s)}\sum_{m,n=0}^{\infty}L_{m,n}(f_0)p_{mn}r^ms^n-f_0\right\Vert_{C^*(\mathbb{R}^2)}
 \\&\leq&
 K\left\{\left\Vert\frac{1}{p(r,s)}\sum_{m,n=0}^{\infty}L_{mn}(f_0)p_{mn}r^ms^n-f_0\right\Vert_{C^*(\mathbb{R}^2)}\omega(f; \gamma(r,s)) + \omega(f; \gamma(r,s))\right.
 \\&& \qquad \quad
 \left.+\left\Vert\frac{1}{p(r,s)}\sum_{m,n=0}^{\infty}L_{mn}(f_0)p_{mn}r^ms^n-f_0\right\Vert_{C^*(\mathbb{R}^2)}\right\}
\end{eqnarray*}
where $K=\max\{1+\pi^2, M_f\}$. Finally by taking limit as $r,s\to 1^-$ the proof is completed by the assumptions (i) and (ii) of the theorem.
\end{proof}

\section{Illustrative example}
Let $f\in C^*(\mathbb{R}^2)$ and $\left(S_{mn}(f)\right)$  be the sequence of $mn-$th partial sums of double Fourier series
\begin{eqnarray*}
\frac{a_{00}}{4}&+&\frac{1}{2}\sum_{j=1}^{\infty}(a_{j0}\cos jx+c_{j0}\sin jx) +\frac{1}{2}\sum_{k=1}^{\infty}(a_{0k}\cos ky+b_{0k}\sin ky)\\ &+&\mathlarger{\sum_{j=1}^{\infty}\sum_{k=1}^{\infty}}\left\{
   \begin{array}{ll}
     a_{jk}\cos jx\cos ky+b_{jk}\cos jx\sin ky\\
     +c_{jk}\sin jx\cos ky+d_{jk}\sin jx\sin ky
   \end{array}
   \right\}
\end{eqnarray*}
of $f$ where $a_{jk}, b_{jk}, c_{jk}, d_{jk}$'s are the real double Fourier coefficients. We know that sequence $\left(S_{mn}(f)\right)$ does not have to converge to  $f$ neither uniformly nor pointwise in general and many studies have been done concerning the conditions ensuring the convergence. Besides some averaging processes have been applied to recover the convergence of double Fourier series. Now we apply Theorem \ref{theorem1} with $p_{mn}= 1$ to the operator
\begin{eqnarray}\label{example}
L_{mn}(f; x,y)=(1+(-1)^{m+n})T_{mn}(f;x,y)
\end{eqnarray}
where $(T_{mn})$ is the {\small $(\frac{m}{m+1}, \frac{n}{n+1})-$}th Abel-Poisson mean of double Fourier series of $f$. Then, it follows that
\begin{eqnarray*}
L_{mn}(f;x,y)\!=\!
\left(1+(-1)^{m+n}\right)\left\{\frac{1}{4\pi^2}\huge\bigintss_{-\pi}^{\pi}\!\!\!\huge\bigintss_{-\pi}^{\pi}\!\!\!\!f(x-u,y-v)P\!\left(\frac{m}{m+1},u\right)\!P\left(\frac{n}{n+1},v\right)dudv\right\}
\end{eqnarray*}
by the help of Abel-Poisson kernel $P(r,t)=\frac{1-r^2}{(1-2r\cos t+r^2)}\cdot$ Condition \eqref{basic} is satisfied since  we have $L_{mn}(f_0;x,y)=1+(-1)^{m+n}$. Besides, since
\begin{eqnarray*}
&&\!\!L_{mn}(f_1;x,y)\!=\!\left(1+(-1)^{m+n}\right)\!\left(\frac{m}{m+1}\right)\sin x,\quad L_{mn}(f_2;x,y)\!=\!\left(1+(-1)^{m+n}\right)\!\left(\frac{n}{n+1}\right)\sin y,\\
&&\!\!L_{mn}(f_3;x,y)\!=\!\left(1+(-1)^{m+n}\right)\!\left(\frac{m}{m+1}\right)\cos x,\quad L_{mn}(f_4;x,y)\!=\!\left(1+(-1)^{m+n}\right)\!\left(\frac{n}{n+1}\right)\cos y,
\end{eqnarray*}
we get
\begin{eqnarray*}
\left\Vert (1-r)(1-s)\sum_{m,n=0}^{\infty}L_{mn}(f_i)r^ms^n-f_i\right\Vert_{C^*(\mathbb{R}^2)}=
\begin{cases}
\frac{(1-r)(1-s)}{(1+r)(1+s)}, \quad &if \quad i=0\\
E(r,s),  &if \quad i=1,3\\
E(s,r) &if\quad i=2,4
\end{cases}
\end{eqnarray*}
where $E(x,t)=\left| \frac{(1-x)(1-t)}{(1+x)(1+t)}+\frac{(1-x)\ln(1-x)}{x}-\frac{(1-t)(1-x)\ln(1+x)}{(1+t)x}\right|$. Then we obtain
\begin{eqnarray*}
\left\Vert (1-r)(1-s)\sum_{m,n=0}^{\infty}L_{mn}(f_i)r^ms^n-f_i\right\Vert_{C^*(\mathbb{R}^2)}\to 0 \quad as\quad r,s\rightarrow 1^-
\end{eqnarray*}
for $i=0,1,2,3,4$ and by Theorem \ref{theorem1} we conclude that $L_{mn}(f)\to f\ (J_1)$. However, classical Korovkin theorem for periodic functions of two variables fails to work for $L_{mn}$ since even $L_{mn}(f_0)=(1+(-1)^{m+n})\nrightarrow f_0$ as $m,n\to\infty$.

Now we verify the approximation of sequence of linear operators $(L_{mn})$ in (\ref{example}) by the help of Theorem \ref{theorem2}. $L_{mn}(f_0)\to f_0 \ (J_1)$ is satisfied since
\begin{eqnarray*}
\left\Vert (1-r)(1-s)\sum_{m,n=0}^{\infty}L_{mn}(f_0)r^ms^n-f_0\right\Vert_{C^*(\mathbb{R}^2)}=\frac{(1-r)(1-s)}{(1+r)(1+s)}
\end{eqnarray*}
and so (i) of Theorem \ref{theorem2} holds. Now consider (ii) of Theorem \ref{theorem2}. Since we have
\begin{eqnarray*}
\gamma(r,s)&=&\sqrt{\left\Vert(1-r)(1-s)\!\!\sum_{m,n=0}^{\infty}\!\!\!L_{mn}\left(\sin^2\left(\frac{u-x}{2}\right)+\sin^2\left(\frac{v-y}{2}\right)\right)r^ms^n\right\Vert_{C^*(\mathbb{R}^2)}}
\\&=&
\sqrt{\left\Vert(1-r)(1-s)\!\!\sum_{m,n=0}^{\infty}\!\!\!\left\{(1+(-1)^{m+n})\!\!\left(\frac{1}{2(m+1)}\!+\!\frac{1}{2(n+1)}\right)\right\}\!r^ms^n\right\Vert_{C^*(\mathbb{R}^2)}}
\end{eqnarray*}
and since the power series method $J_p$ is regular we get that $\lim\limits_{r,s\to1^-} \gamma(r,s)=0$. Then from uniform continuity of $f$ we conclude that  $\lim\limits_{r,s\to1^-}\omega(f; \gamma(r,s))=0$ and (ii) of Theorem \ref{theorem2} is satisfied. So $L_{mn}(f)\to f \ (J_1)$.

\end{document}